%
%
%
\documentclass{amsproc}
\usepackage{mathrsfs}
\usepackage[all,cmtip]{xy}
\usepackage{tikz}
\usepackage{cancel}

\newtheorem{theorem}{Theorem}[section]
\newtheorem{lemma}[theorem]{Lemma}

\theoremstyle{definition}

\newtheorem{proposition}[theorem]{Proposition}

\theoremstyle{remark}
\newtheorem{remark}[theorem]{Remark}

\numberwithin{equation}{section}



\begin{document}

\title{Complex surfaces with mutually \\non-biholomorphic universal covers}

\author{Gabino Gonz\'alez-Diez}
\address{Departamento de Matem\'aticas, Universidad Aut\'onoma de Madrid, Spain.}
\email{gabino.gonzalez@uam.es}
\thanks{Partially supported by Spanish Government Research Project MTM2016-79497.}

\author{Sebasti\'an Reyes-Carocca}
\address{Departamento de Matem\'atica y Estad\'istica, Universidad de La Frontera, Chile.}
\email{sebastian.reyes@ufrontera.cl}
\thanks{Partially supported by Fondecyt Grant 11180024, 1190991 and Redes Grant 2017-170071}

\subjclass[2000]{32J25, 14J20, 14J25}
\date{xx.}


\keywords{Riemann surface, Moduli and  Teichm\"{u}ller  space, Complex surfaces and their universal covers, Field of definition}

\begin{abstract}
It is known that the universal cover of compact Riemann surface is either the projective line, the complex plane or the unit disk. In this article we construct a very explicit  family of complex surfaces that gives rise to uncountably many mutually non-biholomorphic universal covers. The slope of these surfaces, which are going to be total spaces of Kodaira fibrations, is also determined.\end{abstract}

\maketitle

%

\section{Introduction and statement of results}
Let $X$ be a complex  projective  variety and let $k$ be a subfield of the field of the complex numbers. We shall say that $X$ is defined over $k$ or that $k$ is a {\it field of definition} for $X$ if there exists a collection of homogenous polynomials with coefficients in $k$ so that the variety  they define is isomorphic to $X.$ 

\vspace{0.2cm}

Let $S$ be a non-singular minimal projective surface of general type. Based on results due to Bers  and Griffiths \cite{Bers, Griffiths} on  uniformization of complex projective varieties and on results of Shabat \cite{Sha, Shabat} on automorphism groups of universal covers of families of Riemann surfaces, in \cite{th1} the authors succeeded in proving that whether or not $S$ is defined over a given algebraically closed field depends only on the holomorphic universal cover of its Zariski open subsets.

\vspace{0.2cm}

A special kind of complex surfaces as before are the total space of the so-called {\it Kodaira fibrations}. A Kodaira fibration consists of a  compact complex surface $S,$ a compact Riemann surface $B$ and a surjective holomorphic map $S \to B$ everywhere of maximal rank such that the fibers are connected and not mutually isomorphic Riemann surfaces. The genus $g$ of the fibre is called the genus of the fibration and it is known that  necessarily $g\ge 3.$ 

\vspace{0.2cm}

Kodaira fibrations were introduced  by Kodaira himself in \cite{Kodaira} to show that the signature of a differentiable fiber bundle need not be multiplicative. Further properties of Kodaira fibrations were obtained by  Atiyah, Catanese, Hirzebruch and Kas,  among others; see, for example, \cite{Atiyah, Catanese2, Hirzebruch, Kas}.
 For explicit constructions of Kodaira fibrations we refer to the articles  \cite{BD, Catanese, gabkodaira, HG, Zaal}.

\vspace{0.2cm}

Let $S \to B$ be a Kodaira fibration. According to  \cite[Theorem 8]{th1} (together with \cite[Remark 9]{th1})  the relation between fields of definition and universal covers takes a neater form in this case. For these surfaces whether or not
$S$ is defined over a given algebraically closed field depends only on the holomorphic universal cover of S instead of on the whole collection of Zariski open subsets. In a more concrete way, for $i=1,2$, let $S_i \to B_i$  be a Kodaira fibration. In \cite[Theorem 1.1]{CMH} it was proved that if the holomorphic universal covers of  $S_1$ and $S_2$ are biholomorphically equivalent, then $S_1$ and $S_2$ are defined over the same algebraically closed fields. 

\vspace{0.2cm}

In this paper, for any integer $g \ge 7,$ we  construct a collection of Kodaira fibrations of genus $g$ and then we apply \cite[Theorem 1.1]{CMH} to show that it contains an uncountable subfamily whose corresponding holomorphic universal covers are mutually non-biholomorphic. 

\begin{theorem} 
For any $\lambda  \in \mathbb{C} - \{0, -\tfrac{27}{4}\} $ and $r\ge 8$ even, there is a Kodaira fibration $S_{\lambda, r} \to B$ of genus $g=3+\tfrac{r}{2}$ such that if $\lambda$ and $\mu$ are algebraically independent transcendental complex numbers, then the universal covers of $S_{\lambda, r}$ and $S_{\mu, r}$ are non-biholomorphically equivalent bounded domains of $\mathbb{C}^2.$

In particular, for each $r \ge 8$ even, there are in this family uncountably many Kodaira fibrations whose corresponding holomorphic universal covers are pairwise non-biholomorphically equivalent. 
\end{theorem}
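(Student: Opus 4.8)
The plan is to deduce everything from \cite[Theorem 1.1]{CMH} via its contrapositive: if two Kodaira fibrations have biholomorphically equivalent universal covers then they are defined over the same algebraically closed fields, so it suffices to exhibit an algebraically closed field over which exactly one of $S_{\lambda,r},S_{\mu,r}$ is defined. The natural candidate is $k:=\overline{\mathbb{Q}(\mu)}\subset\mathbb{C}$. Since the construction of $S_{\mu,r}$ in the previous section is explicit, with defining equations whose coefficients lie in $\mathbb{Q}(\mu)$, the surface $S_{\mu,r}$ is defined over $k$. Thus the entire first statement reduces to the single assertion
\[
S_{\lambda,r}\ \text{is \emph{not} defined over}\ k=\overline{\mathbb{Q}(\mu)}.
\]
(That the covers are bounded domains of $\mathbb{C}^2$ is a standard consequence, for Kodaira fibrations, of the Bers–Griffiths uniformization recalled in the introduction.)

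To prove this I would invoke the descent principle that the field of moduli is contained in every field of definition. The construction is algebraic in the parameter, so for every $\sigma\in\mathrm{Aut}(\mathbb{C})$ there is a biholomorphism $S_{\lambda,r}^{\sigma}\cong S_{\sigma(\lambda),r}$: applying $\sigma$ to the defining equations, whose coefficients lie in $\mathbb{Q}(\lambda)$, merely replaces $\lambda$ by $\sigma(\lambda)$. Consequently, were $S_{\lambda,r}$ defined over the algebraically closed field $k$, one would have $S_{\sigma(\lambda),r}\cong S_{\lambda,r}^{\sigma}\cong S_{\lambda,r}$ for every $\sigma$ fixing $k$ pointwise.

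The heart of the matter, and the step I expect to be the main obstacle, is the \emph{faithfulness} of the modulus:
\[
S_{\lambda,r}\cong S_{\mu,r}\ \Longrightarrow\ \lambda=\mu.
\]
To establish it I would extract from $S_{\lambda,r}$ a biholomorphic invariant depending injectively on $\lambda$. The discriminant of the cubic $x^{3}+\lambda x+\lambda$ equals $-\lambda^{2}(4\lambda+27)$, which vanishes precisely at the excluded values $\lambda\in\{0,-\tfrac{27}{4}\}$; this singles out the elliptic curve $E_\lambda\colon y^{2}=x^{3}+\lambda x+\lambda$ attached to the branch data of the fibration, whose $j$-invariant
\[
j(E_\lambda)=1728\,\frac{4\lambda}{4\lambda+27}
\]
is a Möbius function of $\lambda$ and hence determines $\lambda$ uniquely. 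The genuine work is to show that $j(E_\lambda)$ is intrinsic to the surface, i.e. recoverable from $S_{\lambda,r}$ independently of its presentation as a fibration; this is where the rigidity and (essential) uniqueness of the Kodaira fibration structure must be used, so that a biholomorphism $S_{\lambda,r}\cong S_{\mu,r}$ carries the fibration, its fibres and their distinguished branch locus to those of the other, together with the explicit geometry of the construction. Granting this, $S_{\lambda,r}\cong S_{\mu,r}$ forces $E_\lambda\cong E_\mu$, whence $j(E_\lambda)=j(E_\mu)$ and $\lambda=\mu$.

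With faithfulness in hand the rest is immediate. As $\lambda,\mu$ are algebraically independent, $\lambda$ is transcendental over $k=\overline{\mathbb{Q}(\mu)}$, so by Steinitz there is $\sigma\in\mathrm{Aut}(\mathbb{C}/k)$ with $\sigma(\lambda)\neq\lambda$ (and $\sigma(\lambda)$ still transcendental, hence $\neq 0,-\tfrac{27}{4}$, so $S_{\sigma(\lambda),r}$ is defined). Faithfulness then gives $S_{\lambda,r}^{\sigma}\cong S_{\sigma(\lambda),r}\not\cong S_{\lambda,r}$, contradicting definability over $k$; this proves the displayed assertion and with it the first statement. For the ``In particular'' clause I would fix a transcendence basis $T$ of $\mathbb{C}$ over $\mathbb{Q}$: it is uncountable, of cardinality $2^{\aleph_0}$, and its elements are pairwise algebraically independent transcendentals, so for each fixed even $r\ge 8$ the family $\{S_{\lambda,r}\}_{\lambda\in T}$ consists of uncountably many Kodaira fibrations with pairwise non-biholomorphic universal covers.
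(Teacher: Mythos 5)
Your outer skeleton coincides with the paper's: both arguments run through the contrapositive of \cite[Theorem 1.1]{CMH} and reduce the theorem to showing that $S_{\mu,r}$ is, while $S_{\lambda,r}$ is not, defined over $\overline{\mathbb{Q}(\mu)}$, and the ``in particular'' clause and the bounded-domain remark are handled as in the paper. But at the decisive step there is a genuine gap. You reduce everything to the faithfulness claim $S_{\lambda,r}\cong S_{\mu,r}\Rightarrow\lambda=\mu$, which you yourself flag as ``the heart of the matter'' and then grant rather than prove. This is not a routine verification: it would require recovering, from the abstract complex surface, the Kodaira fibration (which need not be unique --- double Kodaira fibrations exist, cf.\ Catanese--Rollenske \cite{Catanese}), the fibrewise involution $\tau$, the quotient $X_\lambda$ of the fibres, and the specific degree-two map $X_\lambda\to E_\lambda$; no such rigidity statement is available off the shelf, and proving one is essentially as hard as the theorem itself. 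The paper avoids faithfulness entirely: it shows $S_{\lambda,r}$ is defined over $\overline{\mathbb{Q}(\lambda)}$ (via \cite[Theorem 4.1]{criterio1} and \cite[Theorem 1.2]{CMH}), and then argues that a second field of definition $\overline{\mathbb{Q}(\mu)}$ with $\mu$ algebraically independent of $\lambda$ would, by \cite[Theorem 2.12]{criterio1}, force definition over a number field, which by \cite[Theorem 4.4]{criterio1} would descend to $E_\lambda$ and contradict the transcendence of $j(E_\lambda)=\lambda^3/(4\lambda^3+27\lambda^2)$. So the $j$-invariant enters in both arguments, but the paper uses it only through fields-of-definition machinery, never through an isomorphism-detecting invariant of the surface.

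A secondary but real problem is your justification of the Galois equivariance $S_{\lambda,r}^{\sigma}\cong S_{\sigma(\lambda),r}$ and of the definability of $S_{\mu,r}$ over $\overline{\mathbb{Q}(\mu)}$: you assert that the construction is given by ``defining equations whose coefficients lie in $\mathbb{Q}(\mu)$.'' It is not. The surface is obtained transcendentally, by lifting $\Phi:C_{\lambda,r}\to\mathscr{M}^{pure}_{2,r}$ to Teichm\"uller space, passing to the level-three covering $C_{\lambda,r}[3]$, and pulling back the universal curve $\mathscr{C}_g[3]\to\mathscr{M}_g[3]$; moreover it depends on non-canonical choices of the auxiliary points $e_2,\ldots,e_r\in E_\lambda$, which $\sigma$ moves to \emph{some} admissible points on $E_{\sigma(\lambda)}$, not to a preferred choice. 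Even the statement that $S_{\lambda,r}$ is defined over $\overline{\mathbb{Q}(\lambda)}$ requires the cited theorems of \cite{criterio1} and \cite{CMH}, and equivariance of the whole construction under a (wildly discontinuous) field automorphism is exactly the kind of assertion that needs an argument, not an appeal to equations. As written, then, both pillars of your descent argument --- the conjugation formula and faithfulness --- are unestablished, so the proposal does not yet constitute a proof.
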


As pointed out in \cite{CMH} and \cite{th1}, we remark that, in this respect,  complex surfaces are very much in contrast with compact Riemann surfaces, for which the universal cover only depends on the genus.

\vspace{0.2cm}

As mentioned above, Kodaira fibrations were the first known examples of fiber bundles whose signature is not multiplicative. We recall that the signature $\tau(S)$ of an algebraic surface $S$ is related to the ratio of its Chern numbers $$\upsilon(S):=c_1^2(S)/c_2(S)$$by the formula $$\tau(S)=e(S)(\upsilon(S)-2)/3,$$ where $e(S)$ stands for the Euler characteristic of $S$. Thus, the vanishing of the signature is equivalent to the {\it slope} $\upsilon(S)$ being equal to 2. In the  case of our surfaces $S_{\lambda, r}$ we have:

\begin{proposition}
$$\upsilon(S_{\lambda, r})=2+\frac{3}{2(4+r)}.$$
\end{proposition}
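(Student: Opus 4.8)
The plan is to compute the two Chern numbers $c_1^2(S_{\lambda,r})$ and $c_2(S_{\lambda,r})$ of the total space of the Kodaira fibration $S_{\lambda,r}\to B$ and then to form their ratio. Since $S_{\lambda,r}$ is a surface that fibers over a curve with fiber genus $g=3+\tfrac{r}{2}$, the natural tool is the theory of fibered surfaces: writing $f\colon S_{\lambda,r}\to B$ for the fibration, one has the standard numerical invariants attached to $f$, namely the relative Euler characteristic, the self-intersection of the relative canonical (dualizing) sheaf $K_{S/B}$, and the degree of the Hodge bundle $f_*\omega_{S/B}$. Because the fibration is smooth (maximal rank everywhere, so there are no singular fibers), these invariants simplify considerably, and I would carry out the computation through the following steps.

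First I would record the Euler characteristic. For a smooth fibration with fiber $F$ of genus $g$ over a base $B$ of genus $b$, the Euler characteristics multiply, so $e(S_{\lambda,r})=e(F)\,e(B)=(2-2g)(2-2b)=4(g-1)(b-1)$; this gives $c_2(S_{\lambda,r})$ directly. Next I would compute $c_1^2(S_{\lambda,r})=K_{S}^2$ using the adjunction/relative-canonical formula $K_{S/B}=K_S-f^*K_B$, together with $K_{S/B}^2=K_S^2-8(g-1)(b-1)$ (valid in the smooth case since $K_B^2=0$ on a curve and $K_{S/B}\cdot f^*K_B=(2g-2)(2b-2)$). The remaining input is therefore $K_{S/B}^2$, which by the relevant slope or Arakelov-type identity for smooth fibrations is tied to $\deg f_*\omega_{S/B}$ via Noether's formula $12\deg f_*\omega_{S/B}=K_{S/B}^2+e(S)-e(F)e(B)$; in the smooth case the last two terms cancel and one gets the clean relation $K_{S/B}^2=12\deg f_*\omega_{S/B}$. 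Thus everything reduces to the single number $\deg f_*\omega_{S/B}$, i.e. the degree of the Hodge bundle, which encodes how the moduli of the fibers vary and must be extracted from the explicit construction of $S_{\lambda,r}$ given earlier in the paper.

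With these pieces assembled, the slope is
\[
\upsilon(S_{\lambda,r})=\frac{c_1^2}{c_2}=\frac{K_S^2}{e(S)}=\frac{8(g-1)(b-1)+12\deg f_*\omega_{S/B}}{4(g-1)(b-1)}=2+\frac{3\deg f_*\omega_{S/B}}{(g-1)(b-1)},
\]
so the claimed value $2+\tfrac{3}{2(4+r)}$ is equivalent to the identity $\dfrac{\deg f_*\omega_{S/B}}{(g-1)(b-1)}=\dfrac{1}{2(4+r)}$. Substituting $g-1=2+\tfrac{r}{2}=\tfrac{4+r}{2}$, this becomes $\deg f_*\omega_{S/B}=\dfrac{b-1}{4}$, which I would verify from the concrete data of the construction (the base genus $b$ and the specific way the fibers degenerate in moduli as one moves along $B$).

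The main obstacle will be this last step: computing $\deg f_*\omega_{S/B}$ for the explicit surfaces $S_{\lambda,r}$. Unlike the purely topological quantity $c_2$, the Hodge-bundle degree is a genuinely non-trivial holomorphic invariant that depends on the fine structure of the construction — typically it is read off from the branching or covering data defining $S_{\lambda,r}$, or computed via a Chern-class/Grothendieck–Riemann–Roch argument on an auxiliary model. I expect the bulk of the work, and the only place where the particular parameters $\lambda$ and $r$ enter, to be in pinning down $\deg f_*\omega_{S/B}$ (equivalently $K_{S/B}^2$) in terms of $r$ and the base genus $b$; once that number is in hand, the formula for $\upsilon(S_{\lambda,r})$ follows by the elementary algebra above.
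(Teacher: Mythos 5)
Your slope bookkeeping is correct as far as it goes: for a smooth fibration $e(S)=(2g-2)(2\gamma-2)$, $K_S^2=K_{S/B}^2+8(g-1)(\gamma-1)$, and Noether's formula gives $K_{S/B}^2=12\deg f_*\omega_{S/B}$ since there are no singular fibers; with $g-1=\tfrac{4+r}{2}$ the Proposition is then equivalent to $\deg f_*\omega_{S/B}=\tfrac{\gamma-1}{4}$, i.e.\ to $K_{S/B}^2=3(\gamma-1)$, which is exactly the paper's target identity $K^2=(8+2r)(2\gamma-2)+3(\gamma-1)$ rewritten. But your proposal stops precisely where the proof begins. The one number you leave open, $\deg f_*\omega_{S/B}$, \emph{is} the entire content of the Proposition: every smooth Kodaira fibration satisfies all the general formulas you wrote down, with different slopes, so nothing in your argument distinguishes $S_{\lambda,r}$ from any other such surface. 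Saying the degree "must be extracted from the explicit construction" is a correct diagnosis of the difficulty, not a proof; as it stands you have a clean restatement of the claim, not an argument for it. (A small side remark: $\lambda$ never enters; the slope depends only on $r$ and the base genus $\gamma$.)

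The paper fills exactly this gap, working with $K^2$ directly rather than through the Hodge bundle. It uses the involution $\tau$ of Remark \ref{lapiz1}: after an \'etale base change, $S_{\lambda,r}/\langle\tau\rangle\cong B\times X_\lambda$, which produces a second map $g_{\lambda,r}\colon S_{\lambda,r}\to X_\lambda$ whose ramification divisor splits into $r$ disjoint sections $R_1,\dots,R_r$ of $f_{\lambda,r}$. The canonical class is represented by the divisors of the explicit $2$-forms $W_k=f_{\lambda,r}^*v_k\wedge g_{\lambda,r}^*w_k$, and $K^2=\mbox{div}(W_1)\cdot\mbox{div}(W_2)$ is evaluated term by term; adjunction on the sections gives $R_j^2=-\tfrac{1}{2}\,R_j\cdot g_{\lambda,r}^{-1}(D_k)$, and the key Lemma counts $R_j\cdot g_{\lambda,r}^{-1}(s_1)=(\gamma-1)/r$ using $\deg\pi_j=2^{r-1}$ together with Riemann--Hurwitz for the unramified cover $\zeta_{\lambda,r}\colon B\to C_{\lambda,r}$, whose source has genus $r2^{r-1}+1$. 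If you wished to finish within your own framework, the missing input can be obtained from the double-cover description of $S_{\lambda,r}$ over $Y=B\times X_\lambda$ branched along the union $\bar R$ of the $r$ graphs of the maps $\pi_j\circ\zeta_{\lambda,r}\colon B\to X_\lambda$, each of degree $(\gamma-1)/r$: writing $2L\equiv\bar R$ one finds $K_{S/B}^2=2\bigl(K_{Y/B}+L\bigr)^2=3(\gamma-1)$, but beware that these graphs are \emph{not} numerically equivalent to combinations of the two ruling classes of the product (each has self-intersection $-2(\gamma-1)/r$), and this is precisely the concrete geometric information about $C_{\lambda,r}$ that your sketch never supplies.
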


It is known that the slope of a Kodaira surface lies in the interval $(2, 3)$  and that the limit values $2$ and $3$ correspond to surfaces whose universal cover is the bidisk and the $2$-ball respectively (see, for example, \cite[Section 2.1.2]{Catanese2}).  From this point of view, the fact that the limit of $\upsilon(S_{\lambda,r})$ tends to 2 when $r$ tends to infinity could be interpreted as an indication that the universal covers of the surfaces $S_{\lambda,r}$, which are known to be bounded domains of $\mathbb{C}^2$, approach the bidisk when the   integer $r$ is allowed to take large values. As far as we know, the examples with largest slope have been constructed by Catanese and Rollenske \cite{Catanese} and reach the value $2+2/3.$

{\bf Acknowledgments.} The construction of these families goes back to old work by William J. Harvey and the first author.
William J. Harvey is the supervisor of the first author and the supervisor’s supervisor of the second. We would like to take this opportunity to express our hearty  gratitude to him.


\section{Preliminaries}
\subsection{Uniformization of Kodaira surfaces} 

Let $f: S \to B$ be a Kodaira fibration, let $\pi : \mathbb{H} \to B$ be the universal covering map of $B$ and let $\Gamma$ be the covering group so that $B \cong \mathbb{H}/ \Gamma.$ By considering the pull-back $$h: \pi^*S \to \mathbb{H}$$ of $f$ by $\pi,$ we obtain a new family of Riemann surfaces in which, for each $t \in \mathbb{H},$ the fiber $h^{-1}(t)$ agrees with the Riemann surface $f^{-1}(\pi(t)).$

Teichm\"{u}ller theory together with results due to Bers and Griffiths \cite{Bers, Griffiths} enable us to choose uniformizations $h^{-1}(t)=D_t/K_t$ possessing the following properties:
\begin{itemize} 
\item[(a)] $K_t$ is a Kleinian group acting on a bounded domain $D_t$ of $\mathbb{C}$ which is biholomorphically equivalent to a disk. 
\item[(b)] The union of all these disks $$\mathscr{B}:=\cup_{t \in \mathbb{H}} D_t$$is a bounded domain of $\mathbb{C}^2$ 
which is biholomorphic to the universal cover of $S,$ that is, $S \cong \mathscr{B}/\mathbb{G},$ where $\mathbb{G} < \mbox{Aut}(\mathscr{B})$ is the  covering group.
\item[(c)] The group $\mathbb{G}$ is endowed with a surjective homomorphism of groups $\Theta : \mathbb{G} \to \Gamma$ which induces an exact sequence of groups \[ \xymatrix {
  1 \ar[r] &
  \mathbb{K} \ar[r] &
  \mathbb{G} \ar[r]^{\Theta} &
  \Gamma \ar[r] &
  1,
} \]where the subgroup $\mathbb{K}$ preserves each {\it quasi-disk} $D_t$ and acts on it as the Kleinian group $K_t$ for each $t \in \mathbb{H}.$
\end{itemize}

We note that $\mathscr{B}$ carries a fibration structure itself $\mathscr{B} \to \mathbb{H}$ whose fiber over $t \in \mathbb{H}$ is $D_t$ (the domain $\mathscr{B}$   is called a {\it Bergman domain} in Bers' terminology; see \cite[p. 284]{Bers}). The situation is summarized in the following commutative diagram
$$
\begin{tikzpicture}[node distance=2.5 cm, auto]
  \node (P) {$\pi^*S$};
  \node (Q) [right of=P] {$S \cong \mathscr{B}/\mathbb{G}$};
  \node (A) [below of=P, node distance=1.2 cm] {$\mathbb{H}$};
  \node (C) [below of=Q, node distance=1.2 cm] {$B \cong \mathbb{H} / \Gamma$};
  \draw[->] (P) to node {} (Q);
  \draw[->] (A) to node {$\pi$} (C);
  \draw[->] (P) to node [swap] {$h$} (A);
    \draw[->] (Q) to node {$f$} (C);
\end{tikzpicture}
$$

\subsection{Basic Teichm\"{u}ller theory}
Let $C_0$ be a compact Riemann surface of genus $g$ admitting an automorphism 
$\tau_0$ 
of order two with $r$ fixed points such that $C_0/ \langle \tau_0 \rangle$ has genus two. By the Riemann-Hurwitz formula, $r$ is necessarily even and  $$g=3+\tfrac{r}{2}.$$ 

We denote by $R_0^*$ the Riemann surface obtained from $R_0:=C_0 / \langle \tau_0 \rangle$ by removing the $r$ branch values $\{q_1, \ldots, q_r\}$ of the associated covering map $C_0 \to R_0.$ Let us denote by $\mbox{Mod}_g(\tau_0)$ the normalizer of $\langle \tau_0 \rangle$ viewed as a subgroup of the mapping class group $$\mbox{Mod}(C_0)=\mbox{Mod}_{g}.$$ 

The Teichm\"{u}ller space $T_{2,r}$ of the Riemann surface $R_0^*$ is naturally embedded in the Teichm\"{u}ller space $T_g$ of $C_0$ as the fixed locus of $\langle \tau_0 \rangle.$ Therefore, the group $\mbox{Mod}_g(\tau_0)$ acts on $T_{2,r}$ in a obvious way, and this action induces a faithful action of $\mbox{Mod}_g(\tau_0)/\langle \tau_0 \rangle$ on $T_{2,r}.$ This permits to realize $\mbox{Mod}_g(\tau_0)/\langle \tau_0 \rangle$ as a finite index subgroup of the full group of holomorphic automorphisms  of $T_{2,r}$  which is the mapping class group $$\mbox{Mod}(R_0^*)=\mbox{Mod}_{2,r}.$$

In a similar way, we denote by $\mathscr{P}_{2,r}$ the {\it pure mapping class group}, which is the subgroup of $\mbox{Mod}_{2,r}$ consisting of those mapping classes that preserve each distinguished point $q_i$ for $1 \le i \le r.$ Analogously, we denote by $\mathscr{P}_g(\tau_0)$ the {\it pure relative modular group}, which is the subgroup of $\mbox{Mod}_g(\tau_0)$ consisting of those mapping classes that preserve each fixed point of $\tau_0.$ 

We recall  that just as the quotient  $$\mathscr{M}_{g,n} =T_{g,n}/\mbox{Mod}_{g,n} $$yields the moduli space of Riemann surfaces of genus $g$ with a set of $n$ distinguished points, the quotient $$\mathscr{M}^{pure}_{g,n} =T_{g,n}/\mathscr{P}_{g,n}$$yields the moduli space of Riemann surfaces of genus $g$ with an ordered set of $n$ distinguished points. In other words, two $(n+1)$-tuples $(C;p_1, \ldots, p_n)$ and $(D;q_1, \ldots, q_n)$ represent the same point in $\mathscr{M}_{g,n}$ if and only if there is an isomorphism $\alpha:C\to D$ which sends the set $\{p_1, \ldots, p_n\}$ to the set $\{q_1, \ldots, q_n\}$ whereas they represent the same point in 
$  \mathscr{M}^{pure}_{g,n}$ only if $\alpha$ sends $p_i$ exactly to $q_i$ for $ i=1,\ldots, n.$

Next, we introduce the {\it level three} normal subgroup $\mbox{Mod}_g[3]$ of $\mbox{Mod}_g$ consisting of those (homotopy classes of) homeomorphisms of $C_0$ which induce the identity map on the  homology group $\mbox{H}_1(C_0, \mathbb{Z}/3\mathbb{Z})$. Since by Serre's lemma the group $\mbox{Mod}_g[3]$ does not contain any non-trivial element of finite order and since it has finite index in $\mbox{Mod}_g,$ the homomorphism $\mathscr{P}_g(\tau_0) \to \mathscr{P}_{2,r}$ maps injectively the group $$\mathscr{P}_g(\tau_0) \cap \mbox{Mod}_g[3]$$ into a finite index subgroup of $\mathscr{P}_{2,r}$, henceforth denoted by $\mathscr{P}_g(3,\tau_0).$  See \cite[Proposition 1.1]{HGN}.

We refer to the article \cite{HG2} for more details.

\section{Proof of the Theorem}

\subsection{Construction of the surfaces $S_{\lambda, r}$ }

Let $\lambda$ be 
an arbitrary complex number different from $0$ and $-\tfrac{27}{4}.$ Consider the (affine) elliptic curve $$E_{\lambda} \, : \,  y^2=x^3+\lambda x + \lambda$$ endowed with the standard group law $\oplus$, with the point $\infty$ at infinity playing the role of neutral element. The map $\pi : (x,y) \mapsto (x^2,y)$ induces a two-fold covering map between the genus two Riemann surface $$X_{\lambda} \, : \,  y^2=x^6+\lambda x^2+ \lambda$$and $E_{\lambda}$ with branch points 
$s_{+}=(0, \lambda^{1/2})$ and $s_{-}=(0, - \lambda^{1/2}).$ 

Clearly, we can inductively construct a sequence  of      $\overline{\mathbb{Q}(\lambda)}-$rational points 
$$e_{2}, \ldots, e_r, \ldots \in E_{\lambda}$$ 
such that 
all the points $e_i$  in this sequence as well as their differences $e_i \ominus e_j$  
are different from the points
$\pi(s_{+}) \ominus \pi(s_{-})$,   its inverse
$\pi(s_{-}) \ominus \pi(s_{+})$ and  the identity element $\infty \in E_{\lambda}$. 
 Now, for any natural number $r \geq 1$ we consider the complex one-dimensional analytic space
$$C_{\lambda,r}:=\{(p_1, \ldots, p_r) \in X_{\lambda}^r : \pi(p_i)=\pi(p_1) \oplus e_{i} \mbox{ for each } 2 \le i \le r\}.
$$Note that $$C_{\lambda, r} \subset X_{\lambda}^{r} - \Delta_r$$where $\Delta_r$ stands for the diagonal subset of $X_{\lambda}^r$ consisting of the $r$-tuples with two coincident coordinates.

\vspace{0.2cm}

{\bf Claim.} $C_{\lambda, r}$ is a compact Riemann surface of genus $r2^{r-1}+1.$

\vspace{0.2cm}

Let $\Psi : X_{\lambda}^r \to E^{r-1}$ be the holomorphic map defined by 
$$(p_1, \ldots, p_r) \mapsto (\pi(p_2) \ominus \pi(p_1) \ominus e_2, \ldots, \pi(p_r) \ominus \pi(p_1) \ominus e_r).$$  
It is clear that $C_{\lambda, r}=\Psi^{-1}(\infty, \ldots, \infty).$ 
An easy computation shows that the Jacobian matrix of $\Psi$ at a point 
$(p_1, \ldots, p_r)$ is of the form   
$$
 J=\frac{\partial (\Psi_1,  \ldots, \Psi_{r-1})}{\partial (z_1, \ldots,  z_r)}=\left(
\begin{array}
[c]{cccccc}%
-\pi'(p_1) & \pi'(p_2) & 0 & \cdots & 0 &0\\
-\pi'(p_1) & 0 & \pi'(p_3) & \cdots & 0 & 0\\
 \,\, \,\,\vdots & \vdots & \vdots  & \ddots & \vdots & \vdots\\
-\pi'(p_1) & 0 & 0 & \cdots & \pi'(p_{r-1}) & 0\\
-\pi'(p_1) & 0 & 0 & \cdots & 0 & \pi'(p_r)
\end{array}
\right)
$$

 Now, $\pi'(p_{i})=0$ if and only if $p_{i}$ is one the branch points $s_{+}, s_{-}$. But the choice of the points $e_i$
ensures that at most one of the entries of  $(p_1, \ldots, p_r)$ can be a branch point.   Therefore 
  $J$
 has maximal rank $r-1$ at each point of $C_{\lambda,r},$   showing that   $C_{\lambda, r}$ is a (possibly non-connected) non-singular complex curve. Now,
the map 
$$\pi_{\lambda, r} : (p_1, \ldots, p_r) \mapsto (p_1, \ldots, p_{r-1})$$ is  clearly a holomorphic two-fold ramified  cover between $C_{\lambda, r}$ and $C_{\lambda, r-1}$. This implies that  $C_{\lambda, r}$ is a connected Riemann surface. Moreover, it is easy to see that this double cover has exactly  $2^{r}$ branch points, namely all points $ (p_1, \ldots, p_r)$ with $p_r=s_{+}$ or $s_{-}.$ From here the value of the genus follows 
from the Riemann-Hurwitz formula and induction on $r$. This completes the proof of our claim.

\vspace{0.2cm}

Now the strategy is to associate 
 to each point $(p_1, \ldots, p_r) \in C_{\lambda, r}$ a two-fold branched cover of $X_{\lambda}$ ramified over $p_1, \ldots, p_r.$ But, in order to show that this idea leads to a well-defined Kodaira fibration, we must proceed carefully. 
 
\vspace{0.2cm}

Let $\mathbb{H} \to C_{\lambda, r}$  
be the universal cover of $C_{\lambda, r}$ and $\Gamma$ its covering group so that $C_{\lambda, r} \cong \mathbb{H}/ \Gamma.$ The map $$X_{\lambda}^r-\Delta_r \to \mathscr{M}_{2,r}^{pure} \,\, \mbox{ defined by }\,\, (p_1, \ldots, p_r) \mapsto [X_{\lambda}, \{p_1, \ldots, p_r\}]$$ is a holomorphic map \cite[Lemma 1]{HG}; let us denote by $\Phi$ its restriction to $C_{\lambda, r}.$ Following \cite[Proposition 3.1]{HGN}, if $r \ge 7$ then $\mathscr{P}_{2,r}$ acts freely on $T_{2,r}$ and therefore, by the theory of covering spaces, there exists a holomorphic lift $$\hat{\Phi}: \mathbb{H} \to T_{2,r} \,\,\, \mbox{ of } \,\,\, \Phi : C_{\lambda, r} \to \mathscr{M}_{2,r}^{pure}$$ together with a group  homomorphism  $\Theta : \Gamma \to \mathscr{P}_{2,r}$ 
defined by 
$$\Theta({ \alpha}) \circ \hat{\Phi}=\hat{\Phi} \circ { \alpha} \, \, \mbox{ with } \,\, { \alpha } \in \Gamma.$$

Let us now
set $$\Gamma[3]:=\Theta^{-1}(\mathscr{P}_g(3,\tau_0))$$and $C_{\lambda, r}[3]:=\mathbb{H}/\Gamma[3].$  Then, the natural projection $$\zeta_{\lambda, r} : C_{\lambda, r}[3] \to C_{\lambda, r}$$is a finite unbranched covering map of compact Riemann surfaces, and there is an obvious commutative diagram as follows:$$\begin{tikzpicture}[node distance=1.5 cm, auto]
  \node (A) [] {$C_{\lambda, r}[3]=\mathbb{H}/\Gamma[3]$};
    \node (C) [right of=A, node distance=4.5 cm] {$T_{2,r}/\mathscr{P}_g(3, \tau_0)$};
     \node (E) [below of=A] {$C_{\lambda, r}=\mathbb{H}/\Gamma$};
    \node (F) [below of=C] {$T_{2,r}/\mathscr{P}_{2,r}=\mathscr{M}^{pure}_{2,r}$};
    \node (G) [right of=C, node distance=3.5 cm] {$T_g/\mathscr{Mod}_g[3]$};
  
  \draw[->] (A) to node [swap] {$\zeta_{\lambda, r}$} (E);
        \draw[->] (A) to node {} (C);
  \draw[->] (C) to node {} (F);
  \draw[->] (E) to node {$\Phi$} (F);
    \draw[->] (C) to node {} (G);\end{tikzpicture}
$$
Let us denote by $\mathscr{M}_{g}[3]$ the  quotient space $T_g/\mbox{Mod}_g[3].$
This is called the \emph{moduli space with   level three} structure. Clearly, $\mathscr{M}_g[3]$ is a finite Galois cover of $\mathscr{M}_g$ with covering group $$\mbox{Mod}_g/\mbox{Mod}_g[3] \cong \mbox{Sp}(2g, \mathbb{Z}/3\mathbb{Z}).$$ 
Recall that the moduli space $\mathscr{M}_{g}$  comes equipped with a fibration $\pi_{g} :\mathscr{C}_{g} \to \mathscr{M}_{g}$ (called the \emph{universal curve}), whose fiber above a point $[F]\in \mathscr{M}_{g}$   is  a Riemann surface which is not  isomorphic to $F$ but
 to $F/\mbox{Aut}(F).$ 
However, the fact that $\mbox{Mod}_g[3]$ is torsion free,  implies that  
the corresponding level three universal curve $\pi_{g,3} : \mathscr{C}_g[3] \to \mathscr{M}_g[3]$ has the property that the fiber over a point
of $\mathscr{M}_g[3]$ parametrising a Riemann surface 
$F$ is isomorphic to the Riemann surface $F$ itself.  This means that for any non-constant holomorphic map $\varphi:B \to\mathscr{M}_{g}[3]$  the pull-back $ \varphi^*(\mathscr{C}_{g}[3])$ is a genus $g$ Kodaira fibration with base $B$. Thus, if   
 $$
 h_{\lambda, r}: C_{\lambda, r}[3] \to \mathscr{M}_g[3]
 $$
 denotes the holomorphic map defined by the first row of the above diagram then we get a Kodaira fibration a genus $g= 3+\frac{r}{2}$ 
$$f_{\lambda, r}:S_{\lambda, r}:=h_{\lambda, r}^* \mathscr{C}_{g}[3] \to B:=C_{\lambda, r}[3].$$
 simply by  considering the pull-back of $\pi_{g,3}$ by $h_{\lambda, r}.$
 
\vspace{0.2cm}

\begin{remark} \label{lapiz1} For later use, we observe that each 
 element $h \in \mbox{Mod}_g$ induces a fiber-preserving automorphism 
 of the universal curve $\mathscr{C}_g[3] \to \mathscr{M}_g[3]$. 
 When $h=\tau_0$ this automorphism, let us call it $\tau,$  preserves  each of the fibres 
 in the image of
 $T_{2,r}/\mathscr{P}_g(3, \tau_0)$. Therefore
  $\tau$ induces an automorphism of the 
 fibration $f_{\lambda, r}$ which restricts to an automorphism  $\tau_b$ on each fibre $$F_b:=f_{\lambda,r}^{-1}(b) \,\,\,  \mbox{ in such a way that }  \,\,\, F_b/ \langle \tau_b \rangle \cong X_{\lambda}.$$
\end{remark}

\subsection{Universal covers pairwise non-biholomorphic}
As the curve $C_{\lambda, r}$ is defined over $\overline{\mathbb{Q}(\lambda)},$ we can proceed analogously as in the proof of Theorem 4.1 in \cite{criterio1}  to ensure that curve $B$ is also defined over $\overline{\mathbb{Q}(\lambda)}.$   
Consequently, we
can 
apply Theorem 1.2 in \cite{CMH} to  infer that 
 $S_{\lambda, r} $ is also defined over $\overline{\mathbb{Q}(\lambda)}.$ 

  We now assume that $\lambda$ is a trascendental complex number. We claim that if $\mu$ is any  transcendental complex number which is algebraically independent of $\lambda$, then $S_{\lambda, r}$ cannot be defined over the field $\overline{\mathbb{Q}(\mu)}.$ Indeed,  if  that were the case, we could  apply Theorem 2.12 in \cite{criterio1} to deduce that $S_{\lambda, r}$
 is actually  defined over a number field and  this, in turn, would  imply that $E_{\lambda}$ is defined over a number field too (see \cite[Theorem 4.4]{criterio1}). But this is impossible since  the minimum field of definition of an elliptic curve is known to be the field generated by its $j-$invariant, which in this case is $$j(E_{\lambda})=\lambda^3/(4\lambda^3+27\lambda^2).$$  We now apply Theorem 1.1 in \cite{CMH} to prove the first statement of the theorem. The last part of the theorem follows directly from the existence of uncountably many  pairwise algebraically independent transcendental complex numbers.

\section{Proof of the Proposition } 

We recall that if $S$ is an algebraic surface, then $c_2(S)=e(S)$, the Euler class of $S$, and $$c_1^2(S)=K^2$$where $K^2$ stands for the self-intersection of the canonical divisor $K$ of $S.$ Moreover, if $S \to B$ is a Kodaira fibration of genus $g$ over a curve of genus $\gamma,$ it is known that $$e(S)=(2g-2)(2 \gamma -2).$$ Thus, for our 
surface $S_{\lambda, r}$ 
$$
\upsilon(S_{\lambda, r})={K^2}/{(2 \gamma-2)(4+r)}
$$and therefore to prove the equality
$$\upsilon(S_{\lambda, r})=2+\frac{3}{2(4+r)},$$we only need to check that 
 \begin{equation} \label{K22}
K^2 =(8+2r)(2\gamma-2)+3(\gamma-1).
\end{equation}


We begin by noting that Remark \ref{lapiz1} implies that, after passing to a smooth cover of the base if necessary, we can assume that $$S_{\lambda, r}/\langle \tau \rangle \cong B \times X_{\lambda}.$$

Let us consider the holomorphic map
$$g_{\lambda,r}:S_{\lambda, r} \to S_{\lambda, r}/ \langle \tau \rangle \to X_{\lambda}$$
induced by the projection $B \times X_{\lambda} \to X_{\lambda}$.
  Its ramification divisor $R$ is clearly   the fixed set of $\tau$. 
	By a further change 
of base (which does not alter the slope of the fibration) we can get a new Kodaira fibration, which we still denote  $f_{\lambda,r}:S_{\lambda, r} \to B,$ with the property that $R$ decomposes as a union of $r$ disjoint   sections    $R_1 \ldots, R_r$ of $f_{\lambda,r}$ such that  at each fibre  $F_b,$ the section 
$R_j$ picks out the $j$-th branch point of the restriction of 
$g_{\lambda,r}$ to $F_b$, that is to say the $j$-th fixed point of the automorphism $\tau_b$ (see Remark \ref{lapiz1}). 

Let us denote  by $\zeta_{\lambda, r}:B\to  C_{\lambda, r}$
the smooth cover obtained after the successive   changes of base performed on our initial Kodaira fibration and by 
 $\pi_j: C_{\lambda, r} \to X_{\lambda}$ the projection onto the $j$-th coordinate. Then  we see that 

\begin{enumerate}
\item [(i)]the restriction of $g_{\lambda,r}$ to $R_j$ is given by 
\begin{equation}\label{restrictiong}
g_{\lambda,r}|_{R_j} =\pi_j\circ \zeta_{\lambda, r} \circ f_{\lambda,r}|_{R_j}
\end{equation}
\item [(ii)] the restriction of $g_{\lambda,r}$ to a fibre $F_b$ is a degree $2$ covering 
$F_b \to X_{\lambda}$ whose branching value set is 
 $\{ \pi_1 \circ\zeta_{\lambda, r}(b), \cdots, \pi_r \circ \zeta_{\lambda, r}(b)  \}.$  
 \end{enumerate}
	 
Let us now consider the holomorphic forms on $X_{\lambda}$  
given by  $$w_1= xdx/y \,\,\,  \mbox{ and } \,\,\, w_2=dx/y$$
and let us denote by $D_1=s_1+t_1$ and $D_2=s_2+t_2$ their corresponding divisors. It is easily checked that $\{s_1, t_1\}=\{(0,\pm \lambda^{1/2})\}$ and  that $ s_2,t_2$ are the two points at infinity of $X_{\lambda}$.

Similarly we choose two holomorphic forms  $v_1$ and $v_2$ on $B$ with simple zeroes and denote their divisors as  
   $$\mbox{div}(v_k)= \Sigma_{i=1}^{2 \gamma -2}b_i^k.$$ 
   
   Without loss of generality, we can suppose that: 
   \begin{enumerate}
   \item[(a)] $\{b_i^1\}$ and  $\{b_i^2\}$ are disjoint sets.
   \item[(b)]  For no  of  triple $(k,i,j)$ with  $k=1,2$; $i=1, \ldots, 2\gamma-2$
	and $j=1,\ldots, =r$, the point 
$\pi_j \circ \zeta_{\lambda, r}(b_i^k)$ equals  
  $s_1$,  $t_1$, $s_2$ or $t_2.$  \newline In view of the observation (ii) above this is equivalent to saying that the branching values of  the restriction of $g_{\lambda,r}$  to the fibres  $F_{b_i^k}$ 
are different from $s_1$,  $t_1$, $s_2$ or $t_2.$
 \end{enumerate}

   Next we define the  following two sections of the canonical bundle  $K$
   $$
   W_k=f_{\lambda, r}^*v_k \wedge g_{\lambda,r}^*w_k;   \  	\   \   k=1,2.
   $$ 
    Clearly their corresponding divisors of {$S_{\lambda,r}$ can be expressed as  
    \begin{equation*} \label{ec1} 
\mbox{div}(W_k)=\Sigma_{i=1}^{2 \gamma -2} F_{b_i^k} + g_{ \lambda,r}^{-1}( D_k) + R
\end{equation*}

We can now compute 
$K^2$ as the intersection  $\mbox{div}(W_1) \cdot \mbox{div}(W_2)$ (see, for example,  \cite{Beau}). Setting 
$R=\Sigma_{j=1}^r R_j$ and 
taking into account that  $ \Sigma_{i=1}^{2 \gamma -2}b_i^1$  and  $ \Sigma_{i=1}^{2 \gamma -2}b_i^2$ (resp.
 $D_1$ and $D_2$) are disjoint divisors of $B$ (resp. of $X_{\lambda}$) we first get
$$ 
K^2= \left\{ \begin{array}{llllll}
 \cancel{\Sigma F_{b_i^1}\cdot \Sigma F_{b_i^2}} & + &
 \Sigma F_{b_i^1}\cdot g_{\lambda, r}^{-1}(D_2) &+ & \Sigma F_{b_i^1}\cdot \Sigma R_j & + \\
& & & & & \\
 g_{\lambda, r}^{-1}(D_1) \cdot \Sigma F_{b_i^2} &+ &
\cancel{ g_{\lambda, r}^{-1}(D_1) \cdot g_{\lambda, r}^{-1}(D_2)} &+ & g_{\lambda, r}^{-1}(D_1)\cdot \Sigma R_j & +  \\
& & & & & \\
\Sigma R_j \cdot \Sigma F_{b_i^2} &+ &
 \Sigma R_j\cdot g_{\lambda, r}^{-1}(D_2) &+ & \Sigma R_j \cdot \Sigma R_j
 \end{array}     \right.
$$

Next we make two observations. The first one is that 
$F_{b_i^k}\cdot  R_j=1$.
The second one is that, by the property (b) above,  the branching values of the restriction of  $g_{\lambda, r}$ to the fibres $F_{b_i^1}$ do not lie in $D_2$ which implies that 
 the divisors $F_{b_i^1}$ and $g_{\lambda, r}^{-1}(D_2)$ meet transversally at two points. It follows that
 $$ \Sigma F_{b_i^1}\cdot g_{\lambda, r}^{-1}(D_2)=
\Sigma F_{b_i^1}\cdot g_{\lambda, r}^{-1}(s_2)
+ \Sigma F_{b_i^1}\cdot g_{\lambda, r}^{-1}(t_2) 
=2(2 \gamma-2)+2(2 \gamma-2),$$
and the analogous statement  for $ g_{\lambda, r}^{-1}(D_1) \cdot \Sigma F_{b_i^2}$. 
This leaves us with 
\begin{equation} \label{K2}
K^2  = (2r+8)(2 \gamma-2)+\Sigma R_j^2 + \Sigma  g_{\lambda, r}^{-1}(D_1)\cdot R_j +  \Sigma R_j \cdot g_{\lambda, r}^{-1}(D_2)
\end{equation}
Now, Noether's formula for the genus of a curve embedded in a surface (see, for example, \cite[p.12]{Beau}) applied to 
$B \cong R_j \subset S_{\lambda, r}$
  yields the equality   
$$ R_j \cdot K= (2\gamma-2)- R_j^2  $$
But the intersection numbers $R_j \cdot K$ can also be expressed using again the representation of the canonical divisor $K=\mbox{div}(W_k)$, namely 
$$
\begin{array}{lllllll}
R_j \cdot K= R_j \cdot \mbox{div}(W_k)&=&R_j \cdot \Sigma F_{b_i^k} &+& R_j \cdot g_{\lambda,r}^{-1}(D_k) &+& R_j \cdot R \\
&=& (2\gamma-2) &+& R_j \cdot g_{\lambda,r}^{-1}(D_k) &+& R_j^2
\end{array}
$$
From these two expresions for $R_j \cdot K$ we infer that
$$R_j^2=-\frac{1}{2}R_j \cdot g_{ \lambda,r}^{-1}(D_1)=-\frac{1}{2}R_j \cdot g_{ \lambda,r}^{-1}(D_2);$$
 hence our  expression (\ref{K2}) for $K^2$ turns into 
\begin{equation*}
K^2  = (2r+8)(2 \gamma-2) +  \frac{3}{2} \Sigma R_j 
\cdot g_{\lambda, r}^{-1}(D_1).
\end{equation*}
It follows that in order to prove the identity \eqref{K22} it is enough to show that   
 $ \Sigma R_j 
\cdot g_{\lambda, r}^{-1}(D_1)=2(\gamma-1)$. Since $D_1=s_1+t_1$ all we have to do amounts to check that 
  $ R_j \cdot g_{\lambda, r}^{-1}(s_1)= g_{\lambda, r}^{-1}(t_1)\cdot R_j=(\gamma-1)/r$. Thus, the result will follow from the following  lemma.
 
\begin{lemma} 
\begin{enumerate} \mbox{}
\item $|g_{\lambda, r}^{-1}(s_1) \cap R_j|= |g_{\lambda, r}^{-1}(t_1) \cap R_j|=\dfrac{\gamma-1}{r}.$
\item The divisor $R_j$ meets transversally each of the divisors $g_{\lambda, r}^{-1}(s_1)$ and $g_{\lambda r}^{-1}(t_1)$.
\end{enumerate}
\end{lemma}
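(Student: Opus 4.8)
The plan is to reduce both assertions to the ramification of the coordinate projection $\pi_j\colon C_{\lambda,r}\to X_\lambda$ over the two branch points $s_1=s_{+}$ and $t_1=s_{-}$ of $\pi$. By \eqref{restrictiong} and the fact that $f_{\lambda,r}|_{R_j}\colon R_j\to B$ is an isomorphism (recall $R_j$ is a section), a point $p\in R_j$ satisfies $g_{\lambda,r}(p)=s_1$ exactly when the corresponding $b\in B$ satisfies $\pi_j(\zeta_{\lambda,r}(b))=s_1$. Hence, as sets,
$$R_j\cap g_{\lambda,r}^{-1}(s_1)\ \cong\ \zeta_{\lambda,r}^{-1}\!\bigl(\pi_j^{-1}(s_1)\bigr),$$
and since $\zeta_{\lambda,r}$ is unramified this set has cardinality $(\deg\zeta_{\lambda,r})\cdot|\pi_j^{-1}(s_1)|$, with the analogous statement over $t_1$. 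Thus both parts follow once we understand $\pi_j$ near $s_1$ and $t_1$.

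First I would compute the degree of $\pi_j$ and its fibres over $s_{\pm}$. Writing $u:=\pi(p_1)$ and adopting the convention $e_1=\infty$, the defining equations of $C_{\lambda,r}$ read $\pi(p_k)=u\oplus e_k$ for all $k$. A preimage in $\pi_j^{-1}(q)$ has $p_j=q$, which fixes $u=\pi(q)\ominus e_j$ and hence prescribes $\pi(p_k)=u\oplus e_k$ for every $k$; each $k\ne j$ then contributes two choices of $p_k\in\pi^{-1}(u\oplus e_k)$ as long as $u\oplus e_k$ avoids the branch values $\pi(s_{+}),\pi(s_{-})$, so that $\deg\pi_j=2^{r-1}$. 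The crucial point is that when $q=s_{+}$ — so that $u\oplus e_k=\pi(s_{+})\oplus(e_k\ominus e_j)$ — the genericity hypotheses on the $e_i$, namely that the differences $e_k\ominus e_j$ avoid $\infty$ and $\pi(s_{\mp})\ominus\pi(s_{\pm})$, guarantee that none of these values equals $\pi(s_{+})$ or $\pi(s_{-})$. Hence the full count $2^{r-1}$ persists, and likewise over $s_{-}$. Therefore $|\pi_j^{-1}(s_1)|=|\pi_j^{-1}(t_1)|=2^{r-1}=\deg\pi_j$, which in particular shows that $\pi_j$ is \emph{unramified} over both $s_1$ and $t_1$.

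Part (1) is now immediate: since $C_{\lambda,r}$ has genus $r2^{r-1}+1$ (the Claim) and $\zeta_{\lambda,r}$ is unramified, $2\gamma-2=(\deg\zeta_{\lambda,r})\,r\,2^{r}$, so $(\gamma-1)/r=(\deg\zeta_{\lambda,r})2^{r-1}$; combined with the first paragraph this yields $|R_j\cap g_{\lambda,r}^{-1}(s_1)|=(\gamma-1)/r$, and the same for $t_1$. For the transversality in part (2) I would choose local coordinates $(s,w)$ near a point $p'\in R_j\cap g_{\lambda,r}^{-1}(s_1)$ with $R_j=\{w=0\}$ and $\tau\colon(s,w)\mapsto(s,-w)$. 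As $g_{\lambda,r}$ factors through the quotient by $\tau$, the pullback under $g_{\lambda,r}$ of a local coordinate $x$ on $X_\lambda$ vanishing at $s_1$ is $\tau$-invariant, hence of the form $\gamma(s)+w^{2}\varphi(s,w)$ with $\varphi$ a unit (a unit because $g_{\lambda,r}|_{F_{b'}}$ has a simple branch point at $p'$) and $\gamma(s)=x\bigl(\pi_j\zeta_{\lambda,r}(b(s))\bigr)$. The curve $g_{\lambda,r}^{-1}(s_1)=\{\gamma(s)+w^{2}\varphi=0\}$ is then smooth at $p'$ and meets $\{w=0\}$ transversally precisely when $\gamma'(s')\ne0$, that is, when $\pi_j\circ\zeta_{\lambda,r}$ is an immersion at $b'$; as $\zeta_{\lambda,r}$ is étale this is exactly the unramifiedness of $\pi_j$ over $s_1$ proved above. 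The argument over $t_1$ is identical.

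The step I expect to demand the most care is the ramification analysis of $\pi_j$ over the branch points $s_{\pm}$ of $\pi$. One must check that pinning $p_j$ to a branch point of $\pi$ neither collapses the fibre nor creates ramification of $\pi_j$: the simple ramification that $\pi$ contributes at $s_{\pm}$ is exactly absorbed, leaving $\pi_j$ étale there. This is the place where the explicit genericity conditions imposed on the auxiliary points $e_i$ enter decisively. Once this local picture is secured, both the cardinality count and the transversality assertion drop out.
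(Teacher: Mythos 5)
Your proof is correct and follows essentially the same route as the paper: you reduce part (1) via \eqref{restrictiong} to counting $\zeta_{\lambda,r}^{-1}(\pi_j^{-1}(s_1))$, establish that $\pi_j$ is unramified over $s_1,t_1$ from the genericity conditions on the $e_i$ (the paper phrases this as ``at most one coordinate of a point of $C_{\lambda,r}$ can be a branch point''), apply Riemann--Hurwitz to the unramified cover $\zeta_{\lambda,r}$, and for part (2) reduce transversality, exactly as the paper does, to non-singularity of $g_{\lambda,r}|_{R_j}=\pi_j\circ\zeta_{\lambda,r}\circ f_{\lambda,r}|_{R_j}$, with your local model $\gamma(s)+w^{2}\varphi(s,w)$ merely supplying a justification the paper asserts without detail. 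One small bonus: your Riemann--Hurwitz identity $2\gamma-2=r\,2^{r}\deg\zeta_{\lambda,r}$ is the correct one (given that $C_{\lambda,r}$ has genus $r2^{r-1}+1$), and it silently repairs the missing factor $r$ in the paper's displayed formula $2\gamma-2=2^{r}\deg(\zeta_{\lambda,r})$, which as printed would give $\gamma-1$ rather than $(\gamma-1)/r$.
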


\begin{proof} 
 
(1) By (\ref{restrictiong}) $
f_{\lambda, r}$ induces  a bijection  between the finite sets 
$g_{\lambda,  r}^{-1}(s_1) \cap R_j$ and 
$(\pi_j \circ \zeta_{\lambda, r})^{-1}(s_1)$, its inverse being the map that sends each $b\in (\pi_j \circ \zeta_{\lambda, r})^{-1}(s_1)$ to the (unique) intersection point 
of $F_b$ and $R_j.$ Hence
 $|g_{\lambda,r}^{-1}(s_1^1) \cap R_j|=\mbox{deg}(\zeta_{\lambda, r}) |\pi_j^{-1}(s_1)|.$ Now, since only one of the $r$ coordinates of a 
point of $C_{\lambda, r}$ 
can be  equal to either  $s_1$ or $t_1,$ we infer
 that  these two points are  not  branch values of $\pi_j$ and that 
 $|\pi_j^{-1}(s_1)|=|\pi_j^{-1}(t_1)|=2^{r-1}= 
\mbox{deg}(\pi_j)$. Therefore 
$$|g_{\lambda, r}^{-1}(s_1) \cap R_j|=|g_{\lambda, r}^{-1}(t_1) \cap R_j|=2^{r-1}
\mbox{deg}(\zeta_{\lambda, r}).$$

Now, we apply the Riemann-Hurwitz formula to the 
 smooth covering $\zeta_{\lambda, r}:B\to  C_{\lambda, r}$
to see that $
2\gamma -2=2^r\mbox{deg}(\zeta_{\lambda, r}),$ and the result follows.

\vspace{0.2 cm}

 (2) To show that $R_j$ meets transversally the divisor given by the equation   $g_{\lambda,r}=s_1$ (resp. $g_{\lambda,r}= t_1$) at each point $P$ in the intersection, we must check that the restriction  of $g_{\lambda,r}$ to $R_j$ is non-singular at $P$. This follows at once from the expression $g_{\lambda,r}|_{R_j} =\pi_j\circ \zeta_{\lambda, r} \circ f_{\lambda,r}|_{R_j}   $ given in  (\ref{restrictiong}) since  
$f_{\lambda,r}|_{R_j}
:R_j \to B$ is an isomorphism, $\zeta_{\lambda, r}:B\to  C_{\lambda, r}$ is a smooth cover and, as noted above, any point of $C_{\lambda, r}$ 
in the fibre $\pi_j^{-1}(s_1)$ (resp. $\pi_j^{-1}(t_1)$),   such as
 $  \zeta_{\lambda, r} \circ f_{\lambda,r}|_{R_j}(P)$, is a regular point of $\pi_j$.
\end{proof} 

\begin{remark}
The referee has kindly pointed out to us that a shorter computation of $K^2$  can be achieved by using tools more specific of the theory of algebraic surfaces such as the ones employed by Catanese and Rollenske in \cite{Catanese}.
\end{remark}
 
\bibliographystyle{amsalpha}

\end{document}